\newcommand*{\leftmapsto}{\mathbin{\reflectbox{$\mapsto$}}}
\newcommand{\R}{\mathbb{R}}
\newcommand{\Z}{\mathbb{Z}}
\newcommand{\wt}{\widetilde}
\newcommand{\bg}{\mathbf{g}}
\newcommand{\bfB}{\mathbf{B}}
\newcommand{\bfC}{\mathbf{C}}
\newtheorem{proposition}{Proposition}
\newtheorem{theorem}[proposition]{Theorem}
\newtheorem{remark}[proposition]{Remark}
\newtheorem{lemma}[proposition]{Lemma}
\newtheorem{example}{Example}
\newtheorem{assumption}{Assumption}
\newenvironment{proof}[1]{
  \trivlist \item[\hskip \labelsep{\it #1}]}{\hfill\mbox{$\square$}
  \endtrivlist}
\title{Rational certificates of non-negativity on semialgebraic subsets of cylinders
}
\author{Gabriela Jeronimo$^{1,2,3,}$\footnote{Partially supported by Universidad de Buenos Aires (UBACYT 20020190100116BA),  CONICET (PIP 2021-2023 GI 11220200101015CO) and Agencia Nacional de Promoci\'on Cient\'ifica y Tecnol\'ogica (PICT 2018--02315), Argentina.}
\and Daniel Perrucci$^{1,2,*}$}
\date{}
\begin{document}

\maketitle

{\small
\noindent $^1$ Universidad de Buenos Aires, Facultad de Ciencias Exactas y Naturales, Departamento de Matem\'atica. Buenos Aires, Argentina. \\
\noindent $^2$ CONICET -- Universidad de Buenos Aires, Instituto de Investigaciones Matem\'aticas ``Luis A. Santal\'o'' (IMAS), Buenos Aires, Argentina.\\
$^3$ Universidad de Buenos Aires, Ciclo B\'asico Com\'un, Departamento de Ciencias Exactas. Buenos Aires, Argentina.
}

\begin{abstract}
Let $g_1,\dots, g_s \in \R[X_1,\dots, X_n,Y]$ and
$S = \{(\bar{x},y)\in \R^{n+1} \mid g_1(\bar{x},y) \ge 0, \dots, g_s(\bar{x}, y) \ge 0\}$
be a non-empty, possibly unbounded, subset of a cylinder in $\R^{n+1}$.
Let $f \in \R[X_1, \dots, X_n, Y]$ be a polynomial which is positive on $S$.
We prove that, under certain additional assumptions,
for any non-constant polynomial $q \in \R[Y]$ which is positive on $\R$, there is
a
certificate of the non-negativity of $f$ on $S$ given by a rational function having
as numerator a polynomial in the quadratic module generated by $g_1, \dots, g_s$ and
as denominator a power of $q$.
\end{abstract}

\bigskip
\textbf{Keywords:} Positivstellensatz, Positive polynomials, Sums of squares, Quadratic modules.

\textbf{MSC2020:} 12D15, 13J30, 14P10.

\section{Introduction}

Certificates of positivity and non-negativity by means of sums of squares is a topic whose roots go back to Hilbert's 17-th problem and its celebrated solution by Artin (\cite{Artin}).
Another milestone in the development of this theory is
the Positivstellensatz by Krivine (\cite{Kri}) and Stengle (\cite{Ste}) which
provides rational certificates for a multivariate polynomial $f$ that is positive
or non-negative on a
basic closed semialgebraic set $S \subset \R^n$.

More recently,
the famous works by Schm\"udgen (\cite{Schm}) and Putinar (\cite{Put}) led to a renewed interest in
these certificates.
Schm\"udgen's Positivstellensatz
ensures the existence of a polynomial certificate of non-negativity for a polynomial $f$
which is positive on a compact set $S$.
Under a stronger assumption which implies compactness of $S$,
Putinar's Positivstellensatz establishes the existence of a simpler polynomial
certificate.
Several subsequent works extended the previous theorems in different directions including
non-compact situations.
We refer the reader to the survey by Scheiderer (\cite{ScheidererSurvey}) and to the
books by Marshall (\cite{MarBook}) and by Powers (\cite{PowersBook}) for a comprehensive treatment
of the subject; see also \cite{SchmBook} for a 
specific reference on the moment problem and its connections with certificates of non-negativity.

In this paper, we address the problem of the existence of certificates of non-negativity in
the following setting.
Let $g_1,\dots, g_s \in \R[\bar{X},Y]= \R[X_1,\dots, X_n,Y]$ and
$$
S = \{(\bar{x},y)\in \R^{n+1} \mid g_1(\bar{x},y) \ge 0, \dots, g_s(\bar{x}, y) \ge 0\}
$$
be a non-empty, possibly unbounded, subset of a cylinder in $\R^{n+1}$.
Let $f \in \R[\bar X, Y]$ be a polynomial which is positive on $S$.
Under certain additional assumptions, we prove that,
for any non-constant polynomial $q \in \R[Y]$ which is positive on $\R$, there is
a
certificate of the non-negativity of $f$ on $S$ given by a rational function having
as numerator a polynomial in the quadratic module generated by $g_1, \dots, g_s$ and
as denominator a power of $q$
(see Theorem \ref{thm:main}).

Variants of this problem have been considered previously.
In \cite{Pow} and in \cite{EscPer},
Schm\"udgen's Positivstellensatz and
Putinar's Positivstellensatz are extended to cylinders with compact cross-section, under some additional
assumptions on the polynomial $f$.
In \cite{KuhlMar} and \cite{KuhlMarSchw2005}, among other results concerning non-negativity of polynomials on non-compact sets, the authors analyze the more general case of $S$ being a subset of a cylinder and they prove
the existence
of a polynomial certificate for a small suitable perturbation of $f$
provided that the polynomials defining $S$
satisfy certain assumptions.

On the other hand, the existence of rational certificates having as a denominator a power of a fixed
particular polynomial has been studied before in different frameworks.
In \cite{Reznick95}, it is proved that a polynomial $f$ which is positive on $\R^n$ is a sum of squares of rational functions having as denominators powers of
$1 + \sum X_j^2$.
Then in \cite{PutVasA} and \cite{PutVasB} this result is generalized to
basic closed semialgebraic sets, under additional assumptions to control the behavior of the polynomial $f$
at infinity.

Going back to Schm\"udgen's and Putinar's Positivstellensatz, in
\cite{Schw} and \cite{NieSchw2007} the authors develop a constructive approach in order to obtain bounds for the degrees
of every term involved in these certificates.
In this work, the main idea to prove Theorem \ref{thm:main} is, as in \cite{Pow} and \cite{EscPer}, to produce for each $y \in \R$ a certificate on the slice of $S$ cut by the equation $Y = y$, in a parametric way such that
all these certificates can be glued together in a single one.
The procedure we follow on each slice is indeed an adaptation of the
one in \cite{Schw} and \cite{NieSchw2007} using \cite{PowersReznick2001}.

This slicing method is somehow complementary to the one in \cite{KuhlMarSchw2005} where, 
instead, in order to deal with subsets of cylinders the fibres with respect to the projection on the variables $\bar X$ are considered to 
reduce the problem to the univariate case, and finally a clever glueing process is designed.
Furthermore, this approach is related to the fibre theorem proved by Schm\"udgen in \cite{Schm2003} (see also its generalization in \cite{Schm2017}), which reduces the moment problem for a closed (possibly unbounded)
basic semialgebraic set to the moment problem for its fibres with respect to a polynomial map with bounded
image.

The rest of the paper is organized in two sections. In Section \ref{sec:assump} we
introduce our assumptions and notation and state the main result, and then in Section
\ref{sec:proof} we prove it.

\section{Assumptions and main result}\label{sec:assump}

We introduce the notation we will use throughout the paper.

Let $g_1,\dots, g_s \in \R[\bar{X},Y]$.
We consider the quadratic module generated by $\bg:=(g_1,\dots, g_s)$:
\[M(\bg) = \left\{ \sigma_0 + \sigma_1 g_1 + \cdots +\sigma_s g_s \mid \sigma_0,\sigma_1, \dots, \sigma_s \in \sum \R[\bar{X}, Y]^2\right\},\]
that is, the smallest quadratic module in $\R[\bar{X},Y]$ that contains $g_1,\dots, g_s$.
As in \cite[Section 5]{KuhlMarSchw2005} (also \cite{EscPer}), we make an assumption on $M(\bg)$
which is weaker than Archimedianity but captures a similar idea on the variables $\bar X$.

\begin{assumption}\label{assump:arq}
There exists $N\in \R_{>0}$ such that
\[ N - \sum_{1\le j\le n} X_j^2 \in M(\bg).\]
\end{assumption}
Note that under this assumption, the set  $S$ is included in the cylinder with compact cross section $ \mathbf{B} \times \R$,
where
$$
\mathbf{B}:=\{ \bar{x} \in \R^n\mid \sum_{1\le j \le n} x_j^2 \le N\}.
$$

For $i=1,\dots, s$, if
\[g_i(\bar{X},Y)= \sum_{0\le k \le m_i} g_{ik}(\bar{X}) Y^k\in \R[\bar{X},Y] \]
with $g_{im_i}(\bar{X}) \ne 0$, we write
\[\wt{g}_i(\bar{X},Y,Z):= Z^{m_i} g_i(\bar{X},Y/Z)= \sum_{0\le k \le m_i} g_{ik}(\bar{X}) Y^k Z^{m_i-k}\in \R[\bar{X},Y,Z] \]
for the homogenization of $g_i$ with respect to the variable $Y$.
We make the following further assumptions on the polynomials $g_1, \dots, g_s$ and the set $S$ they describe.

\begin{assumption}\label{assump:g_i} {\ }
\begin{enumerate}
\item $S \ne \emptyset$.
\item For $i=1,\dots, s$, $m_i=\deg_Y(g_i)$ is even.
\item $S_\infty:=\{\bar{x} \in \R^n \mid g_{1 m_1}(\bar{x}) \ge 0, \dots, g_{s m_s}(\bar{x}) \ge 0 \} \subset \mathbf{B}$.
\end{enumerate}
\end{assumption}
Indeed, once Assumption $1$ is made, the third condition in Assumption \ref{assump:g_i} could be replaced by the condition that $S_{\infty}$
is bounded, since it is always possible to increase $N$ if necessary. Nevertheless, for simplicity we
assume that $N$ is big enough.
On the other hand, the following example shows that the third condition in Assumption \ref{assump:g_i} does not follow from
Assumption \ref{assump:arq} and the first two conditions in Assumption \ref{assump:g_i}.

\begin{example}
For $n = 2$, consider
$g_1 = X_1Y^2 + (1 - X_1^2 - X_2^2)$ and $g_2 = -X_1Y^2 + 1$.
Then
$$
2 - X_1^2 - X_2^2 = g_1 + g_1 \in M(\bg),
$$
so Assumption \ref{assump:arq} is satisfied. In addition $S \ne \emptyset$ (moreover, it is not bounded) since $(0, 0, y) \in S$ for every
$y \in \R$. However
$$
S_{\infty} = \{(x_1, x_2) \in \R^2 \ | \ x_1 = 0 \}
$$
is not a bounded set.
\end{example}

Let $q \in \R[Y]$ be a non-constant polynomial which is positive on $\R$ (and therefore, a sum of squares
in $\R[Y]$),
$$
q(Y) = \sum_{0 \le k \le m_0} q_kY^k
$$
with $m_0 > 0$ and $q_{m_0} \ne 0$. The assumption of $q$ being positive on $\R$ implies
$m_0$ is even and $q_{m_0} > 0$. We write
$$
\widetilde q(Y, Z) := Z^{m_0}q(Y/Z) = \sum_{0 \le k \le m_0} q_kY^kZ^{m_0-k}.
$$
Note that $\widetilde q(Y, Z)$ is a sum of squares in $\R[Y, Z]$, $\widetilde q$ is non-negative in $\R^2$ and
it only vanishes at the origin.

Let
$$
\bfC := \{(y,z) \in \R^2 \ | \ \widetilde q(y, z) = 1, z \ge 0\}
$$
and
\[\wt{S} := \{ (\bar{x},y,z) \in \R^{n+2} \mid \ \wt{g}_1(\bar{x},y,z) \ge 0, \dots, \wt{g}_s(\bar{x},y,z)\ge 0, \,
(y, z) \in \bfC \}
.\]

For $\theta \in [0, \pi]$ and $\rho \in \R$, we have that
$$
\widetilde q (\rho \cos(\theta), \rho \sin(\theta)) = \rho^{m_0}
\widetilde q( \cos(\theta), \sin(\theta)).
$$
Therefore, for any such $\theta$, there exists a unique $\rho(\theta) \in [0, +\infty)$ such that
$(\rho(\theta) \cos(\theta), \rho(\theta) \sin(\theta)) \in \bfC$, which is
$$
\rho(\theta) =   \widetilde q( \cos(\theta), \sin(\theta))^{-1/m_0}
$$
and satisfies $\rho(\theta) > 0$.
Since the function $\rho: [0, \pi] \to \R$ is continuous,
$$
\bfC = \{ (\rho(\theta)\cos(\theta), \rho(\theta)\sin(\theta) ) \ | \
\theta \in [0, \pi] \}
$$
is a compact set.
Moreover,
the set $\widetilde S \cap \{z \ne 0\}$ is in bijection with $S$.
This bijection is given by

$$
\begin{array}{ccc}
\widetilde S \cap \{z \ne 0\} & & S \\[2mm]
(\bar x, y, z) & \mapsto & (\bar x, y/z) \\[2mm]
\left( \bar x, \rho({\rm arccot}(y))\frac{y}{\sqrt{y^2  + 1}}, \rho({\rm arccot}(y))\frac{1}{\sqrt{y^2  + 1}}    \right)& \leftmapsto & (\bar x, y)
\end{array}
$$

\medskip

\begin{center}
\begin{tikzpicture}
      \draw[<->] (-4.7,0) -- (4.7,0) node[below] {$Y$};
      \draw[<->] (0,-0.7) -- (0,3.7) node[left] {$Z$};
      \draw[line width=0.8pt, domain= 0:180,smooth,variable=\x,] plot ( { 1.8*(2 - ((\x-90)/360)^2 )*cos(\x)  },
      { 1.8*(1.5 + sin(2.5*\x)/2 )*sin(\x) }  );
      \node at (-3.5,1.5) {$\bfC$};
      \draw[-] (0,0) -- (3.8,3) node[above]{$(y,1)$};
      \draw[line width=0.4pt, domain= 0:40,smooth,variable=\x,] plot ( {0.5*cos(\x)}, { 0.5*sin(\x) }  );
      \node at (1.7,0.22) {$\theta = {\rm arccot}(y)$};
      \draw[-, dashed] (-4.5,3) -- (4.5,3) ;
      \node at (5.2,2.15) {$(\rho(\theta)\cos(\theta), \rho(\theta)\sin(\theta))$};
      \fill (3.8, 3) circle[radius=1.5pt];
      \fill (2.8, 2.22) circle[radius=1.5pt];
      \end{tikzpicture}
\end{center}

\medskip

This implies
$$
\wt{S} \cap \{ z\ne0\}
\subset \bfB \times \bfC.
$$
On the other hand,
Assumption \ref{assump:g_i} implies
$$
\wt{S} \cap \{ z=0\} =
(S_\infty \times \{(-\rho(\pi),0)\})
\cup
(S_\infty \times \{(\rho(0),0)\})
\subset \bfB \times \bfC.
$$
We conclude that $\wt{S} \subset \bfB \times \bfC$
and therefore
$\wt{S}$ is compact.

Similarly,  for a polynomial
\[f(\bar{X},Y)= \sum_{0\le k \le m} f_{k}(\bar{X}) Y^k\in \R[\bar{X},Y] \]
with $f_m(\bar{X}) \ne 0$, we write
\[\wt{f}(\bar{X},Y,Z):= Z^{m} f(\bar{X},Y/Z)= \sum_{0\le k \le m} f_{k}(\bar{X}) Y^k Z^{m-k}\in \R[\bar{X},Y,Z] \]
for its homogenization with respect to the variable $Y$.

It is easy to see that $f$ is positive on $S$ if and only if $\wt{f}$ is positive on $\wt{S} \cap \{ z\ne 0\}$.
We make the following assumptions on  the polynomial $f$
(cf. \cite[Theorem 4.2]{PutVasA},
\cite[Definition 3]{Pow},
\cite[Definition 3]{EscPer}).

\begin{assumption}\label{assump:f} {\ }
\begin{enumerate}
\item $m=\deg_Y(f)$ is even.
\item $f_m(\bar{x})>0$ on $S_\infty$.
\end{enumerate}
\end{assumption}
Under Assumptions \ref{assump:g_i} and \ref{assump:f}, we have that $\wt{f} >0$ on $\wt{S} \cap \{ z=0\}$.

We are ready now to state our main result, using the notation we introduced above.

\begin{theorem}\label{thm:main}
Let $\bg:=g_1,\dots, g_s$ and $f$ be polynomials in $\R[\bar{X},Y]$
such that $f>0$ on $S$ and Assumptions
\ref{assump:arq}, \ref{assump:g_i} and \ref{assump:f} hold.
Let $q \in \R[Y]$ be a non-constant polynomial which is positive on $\R$.
Then, there exists $M\in \Z_{\ge 0}$ such that $q^M f \in M(\bg)$.
\end{theorem}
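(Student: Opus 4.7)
The strategy, inherited from \cite{Pow, EscPer}, is to slice $S$ by $Y = y_0$, build a certificate on each slice, and then glue the slicewise certificates into a single polynomial identity; the compactification to $\wt S$ provides the uniform control on the slices as $|y_0| \to \infty$, and $q(Y)^M$ absorbs the denominators that emerge in the gluing.

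For each $y_0 \in \R$ the slice $S_{y_0} = \{\bar x \in \R^n : g_i(\bar x, y_0) \ge 0,\ i = 1,\dots,s\}$ is a compact subset of $\bfB$, and the specialization of Assumption~\ref{assump:arq} at $Y=y_0$ makes the quadratic module generated by $g_1(\cdot,y_0),\dots,g_s(\cdot,y_0)$ in $\R[\bar X]$ Archimedean. Since $f(\cdot,y_0) > 0$ on $S_{y_0}$, Putinar's Positivstellensatz gives a pointwise identity
\[f(\bar x, y_0) \ = \ \sigma_{0,y_0}(\bar x) \ + \ \sum_{i=1}^s \sigma_{i,y_0}(\bar x)\, g_i(\bar x, y_0)\]
with $\sigma_{i,y_0} \in \sum \R[\bar X]^2$. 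The real work is to produce these $\sigma_{i,y_0}$ uniformly in $y_0$, so that their coefficients depend polynomially on $y_0$ up to a controlled denominator.

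This is where the compactification enters. Assumptions~\ref{assump:g_i} and~\ref{assump:f} force $\wt f > 0$ on all of the compact set $\wt S$, including the slice at infinity $\{z = 0\}$, so by compactness the positivity infimum of $\wt f$ on $\wt S$ and of the Archimedean witness $N - \sum X_j^2$ are bounded uniformly away from zero across all slices of $\wt S$ parametrized by $(y,z) \in \bfC$. These uniform positivity bounds are precisely the input required by the constructive, degree-controlled Putinar certificates of \cite{Schw, NieSchw2007} --- themselves based on the effective P\'olya-type bound of \cite{PowersReznick2001} --- to produce slicewise certificates whose sums of squares depend polynomially on the slice parameter $(y,z) \in \bfC$. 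Pulling back to $S$ via the bijection $(\bar x, y) \leftrightarrow (\bar x, y/q(y)^{1/m_0}, 1/q(y)^{1/m_0})$ converts the normalization $\wt q(y,z)=1$ into fractional denominators that are bounded powers of $q(y)^{1/m_0}$. Since $q$ is positive on $\R$ it is a sum of squares in $\R[Y]$, so $q^M$ is a sum of squares for every $M$; choosing $M$ large enough to clear these denominators produces the desired identity $q^M f \in M(\bg)$.

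The main difficulty is the uniform, parametric construction of the slicewise certificate --- turning a family of pointwise Putinar identities into one polynomial identity in $\R[\bar X, Y]$ whose $Y$-denominator is only a power of $q$. Without the compactification to $\wt S$ and the strict positivity of $\wt f$ on the slice at infinity (guaranteed by Assumption~\ref{assump:f}), the positivity and degree bounds underlying the effective certificates of \cite{Schw, NieSchw2007, PowersReznick2001} would degenerate as $|y_0| \to \infty$, and no finite power of $q$ would suffice. It is also why $q$, as the dehomogenization of the compactifying form $\wt q$, is the natural denominator.
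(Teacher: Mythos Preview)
Your outline identifies the correct high-level strategy --- parametrize certificates over the slices and use the compactification $\wt S$ for uniform control --- and this is indeed the paper's plan. But two load-bearing steps are asserted rather than carried out, and they are exactly where the work lies.

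First, you claim that the constructive certificates of \cite{Schw, NieSchw2007} ``produce slicewise certificates whose sums of squares depend polynomially on the slice parameter $(y,z)\in\bfC$.'' Those references give uniform \emph{degree bounds}; they do not hand you a formula for the SOS weights that is polynomial in the input data. The paper obtains polynomial dependence by a specific mechanism: it first subtracts from $\wt q(Y,Z)^{M}\,\wt f$ an explicit element of $M(\bg)$ (Proposition~\ref{prop:polyh}) so that the remainder $h(\bar X,Y,Z)$ is strictly positive on the full product $\Delta\times\bfC$, not merely on $\wt S$. It then applies P\'olya's theorem on the simplex $\Delta$ (Lemma~\ref{lem:Polya}), whose crucial feature here is that the output coefficients $b_\beta$ are \emph{linear} in the coefficients of $h$ viewed as a polynomial in $\bar X$ --- hence automatically polynomial in $(Y,Z)$. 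Invoking Putinar slice by slice as a black box does not deliver this.

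Second, your ``pull back via the bijection'' step substitutes $Y\mapsto y\,q(y)^{-1/m_0}$, $Z\mapsto q(y)^{-1/m_0}$ and so introduces irrational powers of $q$; clearing them would require every term of the certificate on $\wt S$ to be homogeneous in $(Y,Z)$ of degree divisible by $m_0$, which you never arrange. The paper sidesteps this entirely: $h$ is constructed homogeneous in $(Y,Z)$, so each P\'olya coefficient $b_\beta(Y,Z)$ is homogeneous; positivity on $\bfC$ then propagates by homogeneity to all of $\R\times\R_{>0}$, whence $b_\beta(Y,1)$ is a univariate polynomial positive on $\R$ and therefore a sum of squares in $\R[Y]$. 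Setting $Z=1$ gives a polynomial identity in $\R[\bar X,Y]$ directly, with the single prefactor $q(Y)^{M}$ already in place --- no fractional denominators ever appear. A further point you do not address is why the resulting expression $\sum_\beta b_\beta(Y,1)\,\bar\ell(\bar X)^\beta$ lands in the quadratic module $M(\bg)$ rather than only in the preordering: this needs the observation (Lemma~\ref{lem:mcball}) that each $\ell_i\in M(N-\|\bar X\|^2)$ together with the fact that a singly generated quadratic module is closed under products.
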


Note that, since $q \in \R[Y]$ is a sum of squares, multiplying on both sides by $q$ if necessary,
we may assume that $M$ is even. If
$q^Mf =  \sigma_0 + \sigma_1 g_1 + \cdots +\sigma_s g_s$
with $\sigma_0,\sigma_1, \dots, \sigma_s \in \sum \R[\bar{X}, Y]^2$,
then the identity
$$
f = \frac{\sigma_0 + \sigma_1 g_1 + \cdots +\sigma_s g_s}{q^M}
$$
is a rational certificate of non-negativity for $f$ on $S$.
For each $y \in \R$, this identity can be evaluated to
express $f(\bar X, y)$ as en explicit element of the quadratic module generated
by $g_1(\bar X, y), \dots, g_s(\bar X, y)$ in $\R[\bar X]$, thus obtaining a certificate of
non negativity on the slices of $S$ cut by the equations $Y = y$ in a parametric way.

The following example (\cite[Example 8]{EscPer}) shows that the second condition in Assumption \ref{assump:f} is necessary for the result to hold.

\begin{example} For $n = 1$ consider $g_1 = (1 - X^2)^3 \in \R[X, Y]$. Then $S = [-1, 1] \times \R \subset \R^2$ and
$$
\frac43 - X^2 = \frac43X^2\Big(X^2 - \frac32\Big)^2 + \frac43\Big(1-X^2\Big)^3 \in M(g_1)
$$
(see also \cite[Theorem 7.1.2]{MarBook}).
Take $f(X, Y) = (1 - X^2)Y^2 + 1 \in \R[X, Y]$. It is clear that $f > 0$,
however it is not the case that $f_2 = 1 - X^2$ is positive on $S_{\infty} = [-1, 1]$.

For any $q(Y) \in \R[Y]$ which is positive on $\R$, if we have an identity
\begin{equation*}
q(Y)^M((1-X^2)Y^2 + 1) = \sum_{j}\Big( \sum_{i}p_{ji}(X)Y^i \Big)^2 +
\sum_{j}\Big( \sum_{i}q_{ji}(X)Y^i \Big)^2
\big(1 - X^2\big)^3,
\end{equation*}
every term on the right hand side has degree in $Y$ bounded by
$2m'=m_0M+2$ (where $m_0 = \deg (q)$), and then looking at the terms of degree $2m'$ in $Y$, we have
$$
q_{m_0}^M(1-X^2) = \sum_{j }p_{jm'}(X)^2  +
\sum_{j } q_{jm'}(X)^2\big(1 - X^2\big)^3.
$$
This implies that $1-X^2$ belongs to the quadratic module generated by $(1-X^2)^3$ in $\R[X]$, which is false since it is the well-known example from \cite[Example]{Ste2}.
\end{example}

\section{Proof of the main result}\label{sec:proof}

As mentioned in the Introduction, the main idea to prove Theorem \ref{thm:main} is to produce in a parametric way, for each $y \in \R$ a certificate on the slice of $S$ cut by the equation $Y = y$. To this end,
we adapt the techniques in \cite{Schw} and \cite{NieSchw2007}
using \cite{PowersReznick2001}.
We keep the notation from the previous section.

Let $\Delta\subset \R^n$ be the following simplex containing $\bfB$:
\begin{equation}\label{eq:simplex}
\Delta = \{\bar{x}\in \R^n \mid x_j \ge -\sqrt{N} \hbox{ for } j=1,\dots, n, \sum_{1\le j\le n} x_j \le \sqrt{nN}\}.
\end{equation}

\begin{center}
\begin{tikzpicture}
      \draw[<->] (-3,0) -- (4,0);
      \draw[<->] (0,-2.3) -- (0,3.5);
      \draw[line width=0.8pt, domain= 0:360,smooth,variable=\x,]
       plot ( { sqrt(2)*cos(\x)  }, { sqrt(2)*sin(\x) }  );
      \draw (-1.41,-1.41) -- (-1.41,3.4);
      \draw (-1.41,-1.41) -- (3.4, -1.41);
      \draw (-1.41,3.4) -- (3.4, -1.41);
      \node at (2,-1.8) {$\Delta$};
      \node at (-2,-0.3) {$-\sqrt{N}$};
      \node at (-0.6,-1.7) {$-\sqrt{N}$};
       \end{tikzpicture}
\end{center}

Since $\bfC$ is compact and $0 \not \in \bfC$, there exist positive $\rho_1, \rho_2 \in \R$ which are respectively the minimum and maximum value of $\|(y, z)\|$ for $(y, z) \in \bfC$. Taking this into account, the following lemma
can be proved similarly as \cite[Lemma 11]{NieSchw2007}.

\begin{lemma}\label{rem:constLips}
Let $f \in \R[\bar{X}, Y]$. There is a constant
$K>0$
such that, for every $\xi_1, \xi_2 \in \Delta \times \bfC$,
\[|\wt{f} (\xi_1) - \wt{f}(\xi_2)| \le
K\, \|\xi_1-\xi_2 \|.\]
Moreover, the constant $K$ can be computed in terms
of $n$, the degrees in $\bar{X}$ and $Y$ of $f$, the size of the coefficients of $f$, $N$ and $\rho_2$.
\end{lemma}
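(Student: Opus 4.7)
The plan is to enclose $\Delta \times \bfC$ in a compact convex box $D$, bound the partial derivatives of $\wt f$ uniformly on $D$, and then invoke the mean value theorem. Since $\wt f$ is a polynomial and $\Delta \times \bfC$ is compact, the existence of some Lipschitz constant is immediate; the content of the lemma is to express such a constant in terms of the listed parameters.

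First I would bound the coordinates on $\Delta \times \bfC$. From \eqref{eq:simplex}, any $\bar x \in \Delta$ satisfies $x_j \ge -\sqrt{N}$, and combining $\sum_i x_i \le \sqrt{nN}$ with the lower bounds on the other coordinates gives $x_j \le \sqrt{nN} + (n-1)\sqrt{N}$; hence $|x_j| \le \mu$ on $\Delta$, where $\mu := \sqrt{nN} + (n-1)\sqrt{N}$. By the definition of $\rho_2$, every $(y,z) \in \bfC$ satisfies $|y|, |z| \le \rho_2$. Therefore $\Delta \times \bfC$ is contained in the compact convex set
\[
D := [-\mu, \mu]^n \times [-\rho_2, \rho_2]^2.
\]

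Next I would bound the partial derivatives of $\wt f$ on $D$. Writing $\wt f(\bar X, Y, Z) = \sum_{|\alpha| \le d,\, 0 \le k \le m} c_{\alpha, k}\, \bar X^\alpha Y^k Z^{m-k}$ with $d = \deg_{\bar X} f$, $m = \deg_Y f$, and $c_{\alpha,k}$ coefficients of $f$, each of $\partial \wt f/\partial X_j$, $\partial \wt f/\partial Y$ and $\partial \wt f/\partial Z$ is a polynomial whose coefficients are bounded in absolute value by $(d+m)\max_{\alpha,k}|c_{\alpha,k}|$, and each of its monomials $\bar x^\alpha y^a z^b$ is bounded on $D$ by $\mu^{|\alpha|}\rho_2^{a+b}$. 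Bounding the number of monomials of $\wt f$ crudely by $\binom{n+d}{n}(m+1)$ yields an explicit constant $L$ with $\sup_{D} \|\nabla \wt f\|_\infty \le L$, depending only on $n$, $d$, $m$, $\max|c_{\alpha,k}|$, $N$ and $\rho_2$.

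Finally, since $D$ is convex, the mean value theorem applied along the segment joining any $\xi_1,\xi_2 \in D$ gives
\[
|\wt f(\xi_1) - \wt f(\xi_2)| \le \sup_{\xi \in D}\|\nabla \wt f(\xi)\|\cdot \|\xi_1 - \xi_2\| \le K\,\|\xi_1 - \xi_2\|
\]
with $K := \sqrt{n+2}\, L$; this applies in particular to $\xi_1,\xi_2 \in \Delta \times \bfC \subset D$. I do not anticipate any genuine obstacle here. The only subtle point is that $\bfC$ is only a curve in $\R^2$ and hence not convex, so one cannot directly connect two points of $\Delta \times \bfC$ by a segment staying in this set; passing through the convex box $D$ circumvents this without affecting the qualitative form of the constant.
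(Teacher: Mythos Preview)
Your argument is correct: enclosing $\Delta\times\bfC$ in a convex box, bounding $\|\nabla\wt f\|$ there via crude monomial counts, and applying the mean value inequality yields an explicit Lipschitz constant depending only on the listed data, and your observation that one must pass to a convex superset because $\bfC$ is only a curve is exactly the right fix. The paper itself does not give a proof of this lemma but simply refers to \cite[Lemma~11]{NieSchw2007}, which uses the same gradient-bound strategy, so your approach is essentially the intended one.
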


As explained in the previous section, if $f$ is positive on $S$ and Assumptions \ref{assump:arq}, \ref{assump:g_i} and \ref{assump:f} are satisfied, then $\widetilde f$ is positive on
$$
\widetilde S \ \subset \
\mathbf{B} \times \bfC \ \subset \ \Delta \times \bfC.
$$
We denote
\[ f^\bullet:= \min\{ \wt{f}(\bar{x},y,z) \mid (\bar{x},y,z) \in  \wt{S}\} > 0.\]

Our first aim is to construct an auxiliary polynomial $h\in \R[\bar{X},Y,Z]$ such that
\begin{itemize}
\item $h(\bar{x},y,z)$ is positive on $\Delta \times \bfC$, 
\item $h(\bar{x},y,1) = q(y)^M f(\bar{x}, y) - p(\bar{x},y)$,  for a polynomial $p\in M(\mathbf{g})$.
\end{itemize}

Let $r$ be the remainder of $m = \deg_Y (f)$ in the division by $m_0 = \deg (q) > 0$ and,
for $i = 1, \dots, s$, let $e_i \in \Z$ be the minimum non-negative number such that
$m_0 $ divides $ m_i + e_i$.
We have then that $r, e_1, \dots, e_s$ are even and $0 \le r, e_1, \dots, e_s \le m_0-2$.

\begin{proposition} \label{prop:polyh} With our previous notation and assumptions, there exist $\lambda,
\alpha_1, \dots, \alpha_s \in \R_{>0}$, $k\in \Z_{\ge 0}$
and $M,M_1,\dots,M_s\in \Z_{\ge 0}$
such that the polynomial
$$
h(\bar{X},Y,Z) \  = \
\widetilde q(Y,Z)^{M}\, \widetilde{f}(\bar{X},Y,Z) \ -
$$
$$
- \lambda (Y^2 + Z^2)^{\frac{r}2} \displaystyle{\sum_{1\le i\le s}} \alpha_i(Y^2 + Z^2)^{\frac{e_i}2}\widetilde{g}_i(\bar{X},Y,Z)
(\alpha_i(Y^2 + Z^2)^{\frac{e_i}2}\widetilde{g}_i(\bar{X},Y,Z)-\widetilde q(Y,Z)^{\frac{m_i + e_i}{m_0}})^{2k}\widetilde q(Y,Z)^{M_{i}},
$$
is homogeneous in $(Y, Z)$ of degree
$\max\{m; r + (2k+1)(m_i+e_i), i=1,\dots, s\}$,
and satisfies
\[ h(\bar{x},y,z) > \frac{f^\bullet}{2} \]
for all  $(\bar{x},y,z)\in \Delta \times \bfC$.
\end{proposition}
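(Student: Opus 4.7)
The plan is to exploit that $\widetilde q \equiv 1$ on $\bfC$, so that on $\Delta \times \bfC$ the polynomial $h$ simplifies to
\[
h(\bar x, y, z) \;=\; \widetilde f(\bar x, y, z) \;-\; \lambda (y^2+z^2)^{r/2} \sum_{i=1}^s F_i(F_i-1)^{2k}, \qquad F_i := \alpha_i (y^2+z^2)^{e_i/2} \widetilde g_i(\bar x, y, z).
\]
The homogeneity bookkeeping then becomes routine: setting $d := \max\{m;\, r + (2k+1)(m_i+e_i) : i=1,\dots,s\}$, the congruences $m \equiv r \equiv r + (2k+1)(m_i+e_i) \pmod{m_0}$ that follow from the definitions of $r$ and the $e_i$ guarantee that $M := (d-m)/m_0$ and $M_i := (d - r - (2k+1)(m_i+e_i))/m_0$ are non-negative integers, making $h$ homogeneous of degree $d$ in $(Y,Z)$. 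It therefore suffices to establish $h > f^\bullet/2$ pointwise on $\Delta \times \bfC$.

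The heart of the argument is the behaviour of $\varphi_k(t) := t(t-1)^{2k}$: on $[0,1]$ it is non-negative with maximum $M_k := \tfrac{1}{2k+1}\bigl(\tfrac{2k}{2k+1}\bigr)^{2k} \to 0$, while for $t \le -\beta < 0$ it satisfies $\varphi_k(t) \le -\beta(1+\beta)^{2k}$, which tends to $-\infty$ exponentially in $k$. This is the Powers--Reznick--Schweighofer device, letting the correction term remain uniformly small where all $F_i \ge 0$ (which includes $\widetilde S$) and become decisively negative where some $F_i$ is bounded away from $0$ below.

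Accordingly, I would first fix each $\alpha_i > 0$ small enough (using compactness of $\Delta \times \bfC$ and continuity of $\widetilde g_i$) so that $|F_i| \le 1/2$ everywhere on $\Delta \times \bfC$. This uniformly yields $F_i(F_i-1)^{2k} \le M_k$ at every point, and hence $T := \sum_i F_i(F_i-1)^{2k} \le s M_k$ globally. Next, I would partition $\Delta \times \bfC = V \cup W$ with $V := \{\widetilde f > 3f^\bullet/4\}$ and $W$ its complement in $\Delta \times \bfC$. The set $V$ is open and contains the compact set $\widetilde S$ by continuity of $\widetilde f$, while $W$ is compact and disjoint from $\widetilde S$; compactness then furnishes a constant $c > 0$ such that at every point of $W$ some $\widetilde g_i \le -c$. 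Setting $\beta_0 := \min_i \alpha_i c\, \rho_1^{e_i} > 0$, the corresponding $F_i$ satisfies $F_i \le -\beta_0$, whence on $W$ one gets the sharper estimate $T \le (s-1) M_k - \beta_0(1+\beta_0)^{2k}$.

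It only remains to take $\lambda := 1$ and choose $k$ large enough that both $\rho_2^r s M_k < f^\bullet/4$ and $\rho_1^r \bigl[\beta_0(1+\beta_0)^{2k} - (s-1) M_k\bigr] > \max_{\Delta \times \bfC} |\widetilde f| + f^\bullet/2$; these are compatible because $M_k \to 0$ while $(1+\beta_0)^{2k} \to \infty$. On $V$ the first inequality yields $h \ge 3f^\bullet/4 - \rho_2^r s M_k > f^\bullet/2$; on $W$ the second, combined with $(y^2+z^2)^{r/2} \ge \rho_1^r$, yields $h \ge \widetilde f + \rho_1^r\bigl[\beta_0(1+\beta_0)^{2k} - (s-1)M_k\bigr] > f^\bullet/2$. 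The main obstacle is the ordered choice of parameters: the $\alpha_i$ must be fixed first to localize the $F_i$, which pins down $\beta_0$, and only then can $k$ be enlarged to simultaneously dominate $M_k$ on $V$ and the boundedness of $\widetilde f$ on $W$.
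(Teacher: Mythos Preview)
Your argument is correct and follows the same overall architecture as the paper's: reduce to the expression $\widetilde f - \lambda (y^2+z^2)^{r/2}\sum_i F_i(F_i-1)^{2k}$ on $\Delta\times\bfC$ via $\widetilde q\equiv 1$, scale the $\alpha_i$ so that $|F_i|<1$, split $\Delta\times\bfC$ into the region where $\widetilde f>3f^\bullet/4$ and its complement $W$, and use the smallness of $t(t-1)^{2k}$ on $[0,1]$ on the first region against the large negative contribution of the term with $F_{i_0}<0$ on $W$.

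The genuine difference is in how you obtain the uniform lower bound for $|F_{i_0}|$ on $W$ and how you balance the parameters. The paper invokes the \L ojasiewicz inequality together with the Lipschitz estimate of Lemma~\ref{rem:constLips} to get $G_{i_0}\le -\tfrac{1}{c}\bigl(\tfrac{f^\bullet}{4K}\bigr)^L$, then uses only $(G_{i_0}-1)^{2k}\ge 1$ and compensates by taking $\lambda$ large (and $k$ large relative to $\lambda$). You instead extract the constant $c$ directly from compactness of $W$ and continuity of $\min_i\widetilde g_i$, fix $\lambda=1$, and exploit the exponential growth $(1+\beta_0)^{2k}$ in $k$. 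Your route is more elementary---it avoids \L ojasiewicz entirely---but the paper's route has the advantage of making the constant explicitly depend on $f^\bullet$, $K$, and the \L ojasiewicz data, which is precisely what is needed for the effective degree bounds mentioned in Remark~\ref{rem:degbound}. For the bare existence statement of Proposition~\ref{prop:polyh}, your compactness argument is sufficient and arguably cleaner.
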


Note that for $i = 1, \dots, s$, the degree in $(Y, Z)$ of
$$
(Y^2 + Z^2)^{\frac{r + e_i}2}\widetilde{g}_i(\bar{X},Y,Z)
(\alpha_i(Y^2 + Z^2)^{\frac{e_i}2}\widetilde{g}_i(\bar{X},Y,Z)-\widetilde q(Y,Z)^{\frac{m_i + e_i}{m_0}})^{2k}
$$
has remainder $r$ in the division by $m_0$. This ensures that, once $k$ is fixed, there exist unique $M, M_1, \dots, M_s$ so that the degree and homogeneity conditions on $(Y,Z)$ are satisfied.

\begin{proof}{Proof of Proposition \ref{prop:polyh}:}
Since $\Delta \times \bfC$ is compact, for $i=1,\dots, s$, we define
$$
\beta_i =
\sup_{\Delta \times \bfC} \ \Big|  (y^2 + z^2)^{\frac{e_i}2} \wt{g}_i(\bar x, y, z) \Big|,  \qquad \alpha_i =
\frac{1}{\beta_i + 1}
$$
and
$$
G_i(\bar X, Y, Z) = \alpha_i(Y^2 + Z^2)^{\frac{e_i}2} \wt{g}_i(\bar X, Y, Z).
$$
Then, for $i=1, \dots, s$ we have
$$
\sup_{\Delta \times \bfC} \ \Big|  G_i(\bar x, y, z) \Big| < 1.
$$
The polynomial $h$ in the statement of the proposition can be rewritten as
$$
h(\bar{X},Y,Z) \ = \
\widetilde q(Y,Z)^{M}\, \widetilde{f}(\bar{X},Y,Z) \ -
$$
$$
- \, \lambda (Y^2 + Z^2)^{\frac{r}2} \displaystyle{\sum_{1\le i\le s}}
G_i(\bar{X},Y,Z)
(G_i(\bar{X},Y,Z)-\widetilde q(Y,Z)^{\frac{m_i + e_i}{m_0}})^{2k}\widetilde q(Y,Z)^{M_{i}}.
$$

For every $(\bar{x},y,z) \in \Delta \times \bfC$, we have that $\widetilde q(y, z) =1$; then,
$$
h(\bar{x},y,z) = \wt{f}(\bar{x},y,z) - \lambda (y^2 + z^2)^{\frac{r}2} \sum_{1\le i \le s}  G_i(\bar{x},y,z) \left(G_i(\bar{x},y,z)-1\right)^{2k}.
$$
For $i = 1, \dots, s$, if $G_i(\bar{x},y,z)\ge 0$, then $0\le G_i(\bar{x},y,z)<1$. Now, it is not difficult to see that, for every $t\in [0,1]$, the inequality $t(t-1)^{2k} < \dfrac{1}{2ek}$ holds; therefore,
\begin{equation}\label{eq:bound_gi}
G_i(\bar{x},y,z) (G_i(\bar{x},y,z)-1)^{2k} < \frac{1}{2ek}.
\end{equation}

Consider the set
\[A= \left\{ (\bar{x},y,z)\in \Delta \times \bfC \ \mid \ \wt{f}(\bar{x},y,z) \le \frac{3}{4} f^\bullet \right\}.\]
Note that $A\cap \wt{S}= \emptyset$, since $\wt{f}(\bar{x},y,z) \ge f^\bullet$ for every $(\bar{x},y,z) \in \wt{S}$.

For $(\bar{x},y,z) \in (\Delta \times \bfC)-A$,
$$
h(\bar{x},y,z) \ge \wt{f}(\bar{x},y,z) - \lambda \rho_2^r\sum_{{1\le i \le s,}\atop{G_i(\bar{x},y,z) \ge 0} }
G_i(\bar{x},y,z) \left(G_i(\bar{x},y,z)-1\right)^{2k}
$$
and, as a consequence,
\[h(\bar{x},y,z) > \dfrac{3}{4} f^{\bullet} -  \dfrac{\lambda \rho_2^rs}{2ek}.\]
Therefore, if we take $k\ge \dfrac{2\lambda \rho_2^rs}{e f^\bullet}$, we have
\[h(\bar{x},y,z) > \dfrac{ f^\bullet}{2}.\]

For $\bar{\xi} =( \bar{x},y,z ) \in \Delta\times \bfC$, we define $H(\bar{\xi}) = \mbox{dist}(\bar{\xi}, \wt{S})$ and $F(\bar{\xi})= - \min \{0, G_1(\bar{\xi}), \dots, G_s(\bar{\xi})\}$. Note that
$$
F^{-1}(0)=\{\bar{\xi} \in \Delta \times \bfC \ \mid \
G_1(\bar{\xi}) \ge 0, \dots, G_s(\bar{\xi})\ge 0 \}
=
$$
$$
=
\{\bar{\xi} \in \Delta \times \bfC \ \mid \ \wt{g}_1(\bar{\xi}) \ge 0, \dots, \wt{g}_s(\bar{\xi})\ge 0 \}
=
\wt{S} = H^{-1}(0).
$$
By the {\L}ojasiewicz inequality (see \cite[Corollary 2.6.7]{BCR1997}), there exist positive constants $L$ and $c$ such that, for all $\bar{\xi} \in \Delta\times \bfC$,
\[ \mbox{dist}(\bar{\xi}, \wt{S})^L \le c \, F(\bar{\xi}).\]

Since $A \cap \wt{S} = \emptyset$, for $\bar{\xi} \in A$,  $F(\bar{\xi}) > 0$.
Let $i_0$, with $1\le i_0\le s$, be such that $F(\bar{\xi})=-G_{i_0}(\bar{\xi})$;
then,
\[G_{i_0}(\bar{\xi}) \le -\dfrac{1}{c} \, \mbox{dist}(\bar{\xi}, \wt{S})^L.\]
Let $\bar{\xi}_0\in \wt{S}$ be a point where the distance from $\bar{\xi}$ to $\wt{S}$ is attained, namely,  $\mbox{dist}(\bar{\xi}, \wt{S}) = \|\bar{\xi} - \bar{\xi}_0\|$. As
$\frac{f^\bullet}{4} \le \wt{f}(\bar{\xi}_0) - \wt{f}(\bar{\xi}) \le
K\|\bar{\xi}_0 - \bar{\xi} \|,$ where
$K$ is the positive constant from Lemma \ref{rem:constLips},
we deduce that
\[ \mbox{dist}(\bar{\xi}, \wt{S})= \|\bar{\xi}_0 - \bar{\xi} \| \ge \dfrac{f^\bullet}{4K}
\]
and, as a consequence,
\[G_{i_0}(\bar{\xi}) \le - \dfrac{1}{c} \left(\dfrac{f^\bullet}{4K}
\right)^L. \]
Together with inequality \eqref{eq:bound_gi}, this implies that
\begin{align*}
h(\bar{\xi})
&\ge \wt{f}(\bar{\xi})+ \dfrac{\lambda \rho_1^r}{c}\left(\dfrac{f^\bullet}{4K}
 \right)^L- \dfrac{\lambda \rho_2^r(s-1)}{2ek} \\
 {} & = \left(\wt{f}(\bar{\xi}) - f^\bullet + \dfrac{\lambda \rho_1^r}{c}\left(\dfrac{f^\bullet}{4K}
 \right)^L \right)+ \left(f^\bullet- \dfrac{\lambda \rho_2^r (s-1)}{2ek}\right)
\end{align*}

Let $\bar{\xi}^\bullet\in \wt{S}$ be a point where the minimum $f^\bullet$ of $\wt{f}$ is attained in $\wt{S}$, that is, $\wt{f}(\bar{\xi}^\bullet) = f^\bullet$. By Lemma \ref{rem:constLips}, we have
\[|\wt{f}(\bar{\xi}) - f^\bullet| =| \wt{f}(\bar{\xi}) -\wt{f}(\bar{\xi}^\bullet)| \le
K\| \bar{\xi} - \bar{\xi}^\bullet\| \]
and so, if $D:= \mbox{diam}(\Delta\times \bfC)$,
\[|\wt{f}(\bar{\xi}) - f^\bullet| \le
K  D.\]
Therefore, for $\lambda \ge K D  \dfrac{c}{\rho_1^r} \left(
\dfrac{4 K}{f^\bullet}\right)^L$,
we have that
\begin{equation}\label{eq:bound1}
\wt{f}(\bar{\xi}) - f^\bullet + \dfrac{\lambda \rho_1^r}{c}\left(\dfrac{f^\bullet}{4K}
\right)^L \ge 0.
\end{equation}
On the other hand, for $k\ge \dfrac{2 \lambda \rho_2^r s}{ef^\bullet}$, the inequalities
\[\dfrac{\lambda \rho_2^r(s-1)}{2ek}\le \dfrac{f^\bullet}{4}\, \dfrac{(s-1)}{s}  < \dfrac{f^\bullet}{4}\]
hold and so,
\begin{equation}\label{eq:bound2}
f^\bullet- \dfrac{\lambda \rho_2^r(s-1)}{2ek} > \dfrac{3}{4} f^\bullet.
\end{equation}
From \eqref{eq:bound1} and \eqref{eq:bound2}, we conclude that \[h(\bar{\xi}) >\dfrac{3}{4} f^\bullet.\]

Summarizing, for $\lambda \ge K D  \dfrac{c}{\rho_1^r} \left(
\dfrac{4 K}{f^\bullet}\right)^L$
and $k\ge \dfrac{2 \lambda \rho_2^rs}{ef^\bullet}$, we have that $h(\bar{x},y,z) > \dfrac{f^\bullet}{2}$ for every $(\bar{x},y,z) \in \Delta \times \bfC$.
\end{proof}

\begin{remark}\label{rem:degbound} From the proof of Proposition \ref{prop:polyh}, it follows that, once the polynomials $g_1, \dots, g_s$ and $q$ are fixed,
for every $f$ positive on $S$ satisfying Assumptions \ref{assump:arq}, \ref{assump:g_i}
and \ref{assump:f},
an explicit bound for $M$
in terms of $\deg (f)$, the size
of the coefficients of $f$ and $f^{\bullet}$
can be computed similarly as in \cite{NieSchw2007} or
\cite{EscPer}.
\end{remark}

In order to prove our main result, we will apply the following effective version of Polya's theorem for a simplex (see \cite[Theorem 3]{PowersReznick2001}).

\begin{lemma} \label{lem:Polya} Let $P\subset \R^n$ be an $n$-dimensional simplex with vertices $v_0,\dots, v_n$, and let $\bar{\ell}=\{\ell_0,\dots, \ell_n\}$ be the set of barycentric coordinates on $P$, i.e., $\ell_i \in \R[\bar{X}]$ is linear (affine) for $i=0,\dots, n$,
\[\bar{X} = \sum_{0\le i\le n} \ell_i(\bar{X}) v_i , \quad 1 =\sum_{0\le i\le n} \ell_i(\bar{X}),  \quad \hbox{and} \quad \ell_i(v_j) = \delta_{ij} \ \hbox{for } 0\le i,j\le n.\]
Let $h\in \R[\bar{X}]$ be a polynomial  that is strictly positive on $P$.  Then, for $\kappa \gg 0$, $h$ has a representation of the form
\[h= \sum_{|\beta| \le \kappa } b_\beta\, \bar{\ell}^\beta \qquad \hbox{with} \ \ b_\beta > 0.\]
Moreover, for each $\beta$,  $b_\beta\in \R$ is a linear combination of the coefficients of $h$, and an explicit bound for
$\kappa$ can be given in terms of the degree of $h$, the size of the coefficients of $h$, the minimum value of $h$ in $P$
and the vertices $v_0,\dots, v_n$.
\end{lemma}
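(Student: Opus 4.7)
The plan is to reduce the statement to the classical effective Pólya theorem on the standard simplex $\Sigma_n=\{L\in\R^{n+1}_{\ge 0}:\sum_{i} L_i=1\}$, which is exactly the content of the cited \cite[Theorem 3]{PowersReznick2001}. The barycentric coordinate map $\bar{x} \mapsto (\ell_0(\bar{x}),\dots,\ell_n(\bar{x}))$ is an affine bijection from $P$ onto $\Sigma_n$ whose inverse is $L\mapsto \sum_i L_i v_i$, so the problem should be transportable across this bijection.

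Concretely, I would first homogenize $h$ in the barycentric variables. Writing $h=h_{(0)}+h_{(1)}+\cdots+h_{(d)}$ for the decomposition of $h$ into homogeneous components, with $d=\deg h$, I set
\[ H(L_0,\dots,L_n) := \sum_{k=0}^{d} \Bigl(\sum_{i=0}^n L_i\Bigr)^{d-k}\, h_{(k)}\!\Bigl(\sum_{i=0}^n L_i\, v_i\Bigr). \]
Then $H$ is homogeneous of degree $d$ in $L_0,\dots,L_n$ and satisfies $H(L) = h(\sum_i L_i v_i)$ whenever $\sum_i L_i = 1$. Since $h>0$ on $P$ and the parameterization $L\mapsto \sum_i L_i v_i$ sends $\Sigma_n$ onto $P$, the polynomial $H$ is strictly positive on $\Sigma_n$.

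Next I would invoke the classical effective Pólya theorem applied to $H$ to obtain some $\kappa\ge d$ and positive reals $b_\beta$ such that
\[ \Bigl(\sum_{i=0}^n L_i\Bigr)^{\kappa-d} H(L_0,\dots,L_n) = \sum_{|\beta|=\kappa} b_\beta\, L^\beta. \]
Substituting $L_i=\ell_i(\bar{X})$ and using the identity $\sum_i \ell_i(\bar{X})=1$ in $\R[\bar{X}]$, the left hand side collapses to $H(\ell_0,\dots,\ell_n)=h(\bar{X})$, while the right hand side becomes $\sum_{|\beta|=\kappa} b_\beta\, \bar{\ell}^\beta$, giving the claimed representation (in fact with the sharper condition $|\beta|=\kappa$).

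For the effective bound, one checks that $\deg H = \deg h$, that $\min_{\Sigma_n} H = \min_P h$, and that each coefficient of $H$ is an explicit linear combination of the coefficients of $h$ whose weights are polynomial expressions in the fixed coordinates of $v_0,\dots,v_n$. Hence the bound on $\kappa$ supplied by \cite{PowersReznick2001} can be re-expressed in terms of $\deg h$, the size of the coefficients of $h$, $\min_P h$ and the vertices $v_0,\dots,v_n$, as required. The main obstacle is the classical Pólya step, where all the analytic content lives; the passage to $\Sigma_n$ via barycentric coordinates is merely a linear change of variables, and the positivity of the coefficients $b_\beta$ is preserved because $\bar{\ell}^\beta$ is obtained from $L^\beta$ by substitution of affine forms that sum to $1$.
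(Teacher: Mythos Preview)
Your argument is correct, and it is essentially the intended one: the paper does not supply a proof of this lemma at all but simply quotes it from \cite[Theorem 3]{PowersReznick2001}, and your reduction---pull back to the standard simplex via the barycentric coordinate map, homogenize, apply the classical effective P\'olya theorem, then substitute $L_i=\ell_i(\bar X)$ using $\sum_i\ell_i=1$---is exactly how one reads the simplex statement out of that reference. The observations that $\min_{\Sigma_n}H=\min_P h$, that the coefficients of $H$ (and hence the $b_\beta$ for fixed $\kappa$) are linear in those of $h$ with weights depending only on the $v_i$, and that you in fact obtain the sharper condition $|\beta|=\kappa$, are all correct.
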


\begin{lemma}\label{lem:mcball} For $N \in \R_{>0}$,
let $\ell_0(\bar{X}):= \sqrt{nN} - \sum_{1\le j\le n} X_j$ and, for $i=1,\dots, n$, $\ell_i(\bar{X}) := X_i +\sqrt{N}$. Then,  for $i=0,\dots, n$, we have that $\ell_i \in M(N - \| \bar{X}\|^2)$, where $\| \bar{X}\|^2 = \sum_{1\le j \le n}X_j^2$.
\end{lemma}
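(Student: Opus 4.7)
The plan is to treat the $n$ ``lower-bound'' generators $\ell_i = X_i + \sqrt{N}$ separately from the ``upper-bound'' generator $\ell_0 = \sqrt{nN} - \sum_j X_j$, using in both cases the elementary algebraic identity
\[
a - b \ = \ \frac{1}{2a}\left((a-b)^2 + (a^2 - b^2)\right) \qquad (a>0),
\]
which reduces showing that $a-b$ belongs to $M(N - \|\bar X\|^2)$ to showing that $a^2 - b^2$ does (since $\frac{1}{2a}(a-b)^2$ is already a sum of squares and $\frac{1}{2a}$ is a positive scalar, so $M(N - \|\bar X\|^2)$ is stable under the resulting operation).

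For $i = 1, \dots, n$, I would take $a = \sqrt{N}$, $b = -X_i$, so that the identity becomes
\[
\ell_i \ = \ X_i + \sqrt{N} \ = \ \frac{1}{2\sqrt{N}}(X_i + \sqrt{N})^2 + \frac{1}{2\sqrt{N}}(N - X_i^2).
\]
The only nontrivial piece is $N - X_i^2$, which I would write as
\[
N - X_i^2 \ = \ (N - \|\bar X\|^2) + \sum_{j \ne i} X_j^2,
\]
exhibiting it as a sum of squares plus $1$ times $(N - \|\bar X\|^2)$, hence an element of $M(N - \|\bar X\|^2)$.

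For $\ell_0$, I would apply the identity with $a = \sqrt{nN}$ and $b = \sum_j X_j$, giving
\[
\ell_0 \ = \ \frac{1}{2\sqrt{nN}}\Big(\sqrt{nN} - \textstyle\sum_j X_j\Big)^{\!2} + \frac{1}{2\sqrt{nN}}\Big(nN - \big(\textstyle\sum_j X_j\big)^{\!2}\Big).
\]
The second summand is handled by the standard Cauchy--Schwarz sum-of-squares identity $n\|\bar X\|^2 - (\sum_j X_j)^2 = \sum_{i<j}(X_i - X_j)^2$, which yields
\[
nN - \Big(\textstyle\sum_j X_j\Big)^{\!2} \ = \ n\,(N - \|\bar X\|^2) + \sum_{i<j}(X_i - X_j)^2 \ \in \ M(N - \|\bar X\|^2).
\]

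I don't expect any serious obstacle here; the lemma is essentially a short exercise in sum-of-squares manipulation, and the only mildly nonobvious ingredient is the Cauchy--Schwarz identity required to handle $\ell_0$.
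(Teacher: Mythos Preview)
Your proof is correct and is essentially identical to the paper's: the paper writes down, without motivation, precisely the two identities you derive via $a-b=\frac{1}{2a}\big((a-b)^2+(a^2-b^2)\big)$, namely
\[
X_i+\sqrt{N}=\frac{1}{2\sqrt{N}}\Big((X_i+\sqrt{N})^2+\sum_{j\ne i}X_j^2\Big)+\frac{1}{2\sqrt{N}}\big(N-\|\bar X\|^2\big)
\]
and the analogous one for $\ell_0$ using the Cauchy--Schwarz identity $n\|\bar X\|^2-(\sum_j X_j)^2=\sum_{j<j'}(X_j-X_{j'})^2$. Your presentation simply makes explicit the algebraic trick behind the paper's formulas.
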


\begin{proof}{Proof:}
By an explicit computation, we see that
\[\sqrt{nN} -\sum_{1\le j\le n} X_j = \frac{1}{2\sqrt{nN}}\Big((\sqrt{nN}  - \sum_{1\le j\le n} X_j)^2 +
\sum_{1 \le j < j' \le n}(X_j - X_{j'})^2\Big)+\frac{\sqrt{n}}{2\sqrt{N}} (N - \sum_{1\le j \le n} X_j^2).\]
Also,  for $i=1,\dots, n$:
\[X_i + \sqrt{N} = \frac{1}{2\sqrt{N}} \Big((X_i+ \sqrt{N})^2+\sum_{j\ne i}X_j^2\Big) +  \frac{1}{2\sqrt{N}} \Big(N -\sum_{1\le j \le n}X_j^2\Big).\]
This shows that $\ell_0,\ell_1, \dots, \ell_n \in M(N - \| \bar{X}\|^2)$.
\end{proof}

We are now able to prove the main result of the paper.

\begin{proof}{Proof of Theorem \ref{thm:main}:}
We continue to use  the notation introduced before.

Let $h\in \R[\bar{X},Y,Z]$ be as in Proposition \ref{prop:polyh}.
We will apply P\'olya's theorem, as stated in Lemma \ref{lem:Polya}, to the polynomials
$h_{(y,z)}(\bar{X}):=h(\bar{X},y,z)$
for $(y,z)\in \bfC$ and the simplex $\Delta$
defined in (\ref{eq:simplex}).

The vertices of $\Delta$ are
\begin{align*}
v_0 &:=(-\sqrt{N}, \dots,-\sqrt{N}), \\
v_i &:=
v_0 + (0, \dots, \underbrace{(n+\sqrt{n})\sqrt{N}}_{i-{\rm th\ coord.}}, \dots, 0)
\quad \hbox{ for } i=1, \dots, n,
 \end{align*}
and its barycentric coordinates are given by
\begin{align*}
 \ell_0(\bar{X})&:= \frac{1}{(n+\sqrt{n})\sqrt{N}}(\sqrt{nN}-\sum_{1\le j\le n} X_j),\\
 \ell_i(\bar{X})&:=\frac{1}{(n+\sqrt{n})\sqrt{N}} (X_i+\sqrt{N}) \quad \hbox{ for } i=1,\dots, n.
 \end{align*}

For each fixed $(y,z) \in \bfC$,
the polynomial $h_{(y,z)}(\bar{X})$ satisfies
\[h_{(y,z)}(\bar{x}) >\dfrac{f^\bullet}{2}> 0 \quad \hbox{for every} \ \bar{x} \in \Delta.\]
Since the size of the coefficients of $h_{(y,z)}$ as
polynomials in $\bar X$ and their minimum values on $\Delta$
are uniformly bounded for $(y, z) \in \bfC$,
by
Lemma \ref{lem:Polya}, there exists $\kappa\gg 0$ such that
\[ h(\bar{X}, Y, Z) = \sum_{|\beta|\le \kappa} b_\beta(Y,Z) \bar{\ell}(\bar{X})^{\beta}\]
with
$b_\beta \in \R[Y, Z]$ and
$b_\beta(y,z) >0$ for every $(y,z) \in \bfC$. In addition, for each $\beta$,
since $h$ is homogeneous in $(Y, Z)$ and $b_\beta$
is a linear combination of the coefficients of $h$ (seen as a polynomial in $\bar X$), then $b_\beta$ is
a homogeneous polynomial. This implies that
$b_\beta(y,z) >0$ for every $(y,z) \in \R \times \R_{\ge 0} \setminus \{0\}$; in particular, $b_\beta(y,1) >0$ for every $y\in \R$ and therefore $b_\beta(Y, 1)$ is a sum of squares in
$\R[Y]$.

Finally, from the equality
$$
h(\bar{X},Y,1) =q(Y)^{M}\, f(\bar{X},Y) \ -
$$
$$
-\lambda \sum_{1\le i\le s} \alpha_i (Y^2 + 1)^{\frac{r + e_i}2}{g}_i (\bar{X},Y)
(\alpha_i(Y^2 + 1)^{\frac{e_i}2}{g}_i(\bar{X},Y)-q(Y)^{\frac{m_i+e_i}{m_0}})^{2k} q(Y)^{M_{i}}
$$
we have:
$$
q(Y)^{M}\, f(\bar{X},Y) = \lambda
\sum_{1\le i\le s} \alpha_i (Y^2 + 1)^{\frac{r + e_i}2}
(\alpha_i(Y^2 + 1)^{\frac{e_i}2}{g}_i(\bar{X},Y)-q(Y)^{\frac{m_i+e_i}{m_0}})^{2k} q(Y)^{M_{i}}{g}_i (\bar{X},Y)
$$
$$
+ \sum_{|\beta|\le \kappa} b_\beta(Y,1) \bar{\ell}(\bar{X})^{\beta}.
$$

It is clear that for $i = 1, \dots, s$,
$$
\alpha_i (Y^2 + 1)^{\frac{r + e_i}2}
(\alpha_i(Y^2 + 1)^{\frac{e_i}2}{g}_i(\bar{X},Y)-q(Y)^{\frac{m_i+e_i}{m_0}})^{2k} q(Y)^{M_{i}}{g}_i (\bar{X},Y) \in M(\bg).
$$
On the other hand, by Lemma \ref{lem:mcball},
\[\ell_0(\bar{X}),\dots, \ell_n(\bar{X}) \in M(N-\|\bar{X}\|^2)\] and, taking into account that $M(N-\|\bar{X}\|^2)$ is closed under multiplication (since it is generated by a single polynomial), the same holds for all the products $\bar{\ell}(\bar{X})^\beta = \ell_0(\bar{X})^{\beta_0}\cdots \ell_n(\bar{X})^{\beta_n}$.
By the assumption $N-\|\bar{X}\|^2\in M(\bg)$, we deduce that $b_\beta(Y,1) \bar{\ell}(\bar{X})^{\beta} \in M(\bg)$ for every $\beta$ with $|\beta|\le \kappa$. We conclude that $q(Y)^{M}\, f(\bar{X},Y)\in M(\bg)$.
\end{proof}

\begin{remark}
The value of $M$ in Theorem \ref{thm:main} is the same as in Proposition \ref{prop:polyh},
therefore it can be bounded as mentioned in Remark \ref{rem:degbound}.
\end{remark}

\end{document}